\newcommand{\BG}{{\mathbb{G}}}
\newcommand{\gd}{\delta}
\newcommand{\gs}{\sigma}
\newcommand{\gO}{\Omega}
\newcommand{\gep}{\epsilon}
\newcommand{\gl}{\lambda}
\newcommand{\diag}{\text{diag}}
\newcommand{\SL}{\text{SL}}
\newtheorem{prop}{Proposition}[section]
\newtheorem{thm}[prop]{Theorem}
\newtheorem{lem}[prop]{Lemma}
\theoremstyle{definition}
\newtheorem{rem}[prop]{Remark}
\begin{document}
\author{Tsachik Gelander}


\date{\today}

\title{On fixed points and uniformly convex spaces}
\maketitle

\let\languagename\relax 

The purpose of this note is to present two elementary, but useful, facts concerning actions on uniformly convex spaces. We demonstrate how each of them can be used in an alternative proof of the triviality of the first $L_p$-cohomology of higher rank simple Lie groups, proved in \cite{BFGM}.

\medskip

Let $G$ be a locally compact group with a compact generating set $K\ni 1$, and let $X$ be a complete Busemann non-positively curved uniformly convex metric space.\footnote{E.g. a CAT(0) or a uniformly convex Banach space, for general definition and more examples see \cite{GKM}.}
Suppose that $G$ acts continuously by isometries on $X$ such that the displacement goes to infinity, i.e.
$$
 d_K(x)=\max_{k\in K}d(x,k\cdot x)\to\infty~\text{~when~}~x\to\infty,
$$
where $x\to\infty$ means that $x$ eventually leaves every ball in $X$.\footnote{For Banach spaces, this means that the associated linear action does not have asymptotically invariant vectors.} We keep these assumptions for the next two sections.

\section{Minimal displacement}
Let $m=\inf_{x\in X}d_K(x)$. We say that $x\in X$ has minimal $K$--displacement if $d_K(x)=m$.

\begin{lem}\label{lem:minimal}
The set $M$ of points with minimal $K$--displacement is non-empty closed convex and bounded.
\end{lem}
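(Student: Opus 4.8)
The plan is to treat $d_K$ as a convex, continuous, coercive function and to locate its set of minima by a circumcenter argument, with uniform convexity entering precisely at the point of non-emptiness. First I would record the convexity and continuity of $d_K$. Since each $k\in K$ acts by an isometry it carries geodesics to geodesics, so for a geodesic $\gamma$ the curve $t\mapsto k\gamma(t)$ is again a geodesic, and Busemann non-positive curvature makes $t\mapsto d(\gamma(t),k\gamma(t))$ convex. Hence each displacement $x\mapsto d(x,kx)$ is convex along geodesics, and $d_K=\sup_{k\in K}d(\cdot,k\cdot)$ is convex as a supremum of convex functions. The elementary bound $|d(x,kx)-d(y,ky)|\le 2\,d(x,y)$ shows that each term, and therefore $d_K$, is $2$-Lipschitz; continuity of the action together with compactness of $K$ guarantees the supremum is attained, which justifies writing $\max$ and gives continuity of $d_K$.

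Granting for a moment that $M\neq\emptyset$, the three structural properties follow at once. Since $m=\inf d_K$ we have $d_K\ge m$ everywhere, so $M=\{x:d_K(x)\le m\}$ is a sublevel set of a continuous convex function, hence closed and convex. For boundedness I would invoke the standing hypothesis $d_K(x)\to\infty$ as $x\to\infty$: choosing a ball outside of which $d_K>m+1$ confines $M$ to that ball.

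The real content is non-emptiness, and this is where I expect the main obstacle. One cannot simply extract a limit of a minimizing sequence: $X$ need not be locally compact, and, more seriously, $M$ may contain more than one point (it can be a nondegenerate segment), so a minimizing sequence need not be Cauchy. To manufacture a canonical limit I would pass to the nested sublevel sets $M_n=\{x:d_K(x)\le m+\tfrac1n\}$, each of which is non-empty, closed, convex, and by the coercivity argument above contained in one fixed ball, with $M=\bigcap_n M_n$. The form of uniform convexity I would use is that for every $\varepsilon>0$ there is $\delta(\varepsilon)>0$ so that, whenever $d(z,x)\le R$, $d(z,y)\le R$ and $d(x,y)\ge\varepsilon R$, the midpoint $w$ of $x$ and $y$ satisfies
\[
 d(z,w)\le\bigl(1-\delta(\varepsilon)\bigr)R .
\]
Applied to a minimizing sequence for the radius, this (with completeness) produces for each bounded set $M_n$ a unique circumcenter $c_n$ and circumradius $r_n=\inf_{y}\sup_{z\in M_n}d(y,z)$.

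To finish I would show $(c_n)$ is Cauchy. Because $M_{n+1}\subseteq M_n$ the radii decrease, $r_n\downarrow r_\infty\ge0$, and all $r_n\le R_0:=r_1$. Fix $n<l$ and suppose $d(c_n,c_l)\ge\varepsilon$. For every $z\in M_l\subseteq M_n$ one has $d(c_n,z)\le r_n$ and $d(c_l,z)\le r_l\le r_n$, while $d(c_n,c_l)\ge\varepsilon\ge(\varepsilon/R_0)\,r_n$; the displayed inequality with $R=r_n$ then gives $d(z,w)\le(1-\delta_0)r_n$ for the midpoint $w$ of $c_n,c_l$, where $\delta_0=\delta(\varepsilon/R_0)>0$. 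Taking the supremum over $z\in M_l$ yields $r_l\le\sup_{z\in M_l}d(w,z)\le(1-\delta_0)r_n$. If $r_\infty>0$ this is impossible once $r_n<r_\infty/(1-\delta_0)$, so for large $n$ no later center is $\varepsilon$-far; if $r_\infty=0$ then $\diam M_n\le 2r_n\to0$ and the centers are Cauchy directly. Either way $(c_n)$ is Cauchy, and by completeness $c_n\to c$. Since $c_n\in M_n\subseteq M_N$ for all $n\ge N$ and each $M_N$ is closed, $c\in M_N$ for every $N$, whence $c\in\bigcap_N M_N=M$ and $M\neq\emptyset$.
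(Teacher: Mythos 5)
Your reductions are sound and match the paper's: the sublevel sets $C_d=\{x: d_K(x)\le d\}$ are closed, convex (by Busemann convexity of $x\mapsto d(x,kx)$ and taking suprema), and bounded (by the coercivity hypothesis $d_K\to\infty$), so everything hinges on the finite-intersection step, which the paper simply delegates to \cite{GKM}, Lemma 2.2. You chose to prove that step from scratch via circumcenters, and that is where there is a genuine gap: your final sentence asserts $c_n\in M_n$, but with your definition $r_n=\inf_{y}\sup_{z\in M_n}d(y,z)$ (infimum over all of $X$) the circumcenter of a bounded closed convex set need \emph{not} belong to the set. By a classical theorem of Garkavi and Klee, the property ``the Chebyshev center of every bounded convex set lies in the set'' characterizes Hilbert spaces among Banach spaces of dimension $\geq 3$; concretely, in $\ell_p^3$ with $p\neq 2$ there is a triangle whose circumcenter lies off the triangle's plane, and thickening it to a convex body does not help, since $\sup_{z\in C+\varepsilon B}d(y,z)=\sup_{z\in C}d(y,z)+\varepsilon$ has the same minimizer. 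These $L_p$-type spaces are exactly the setting of this paper, so the membership claim is false precisely where it is needed. Nor can you dodge this by restricting the infimum to $y\in M_n$ (which would make $c_n\in M_n$ automatic): then the key contraction step collapses, because the midpoint $w$ of $c_n$ and $c_l$ lies in $M_n$ but not necessarily in $M_l$, so $w$ is not an admissible competitor and the inequality $r_l\le\sup_{z\in M_l}d(w,z)$ no longer bounds the (relative) circumradius of $M_l$. Either way, in the case $r_\infty>0$ your argument produces a limit point $c$ with no reason to satisfy $d_K(c)=m$; only the case $r_\infty=0$ goes through (there $\diam M_n\le 2r_n\to 0$ and any choice of $z_n\in M_n$ is Cauchy with limit in every $M_N$).

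The repair is to replace circumcenters by nearest-point projections, which is essentially how \cite{GKM}, Lemma 2.2 is proved: fix a basepoint $x_0$ and let $p_n$ be the unique nearest point of $M_n$ to $x_0$ (existence and uniqueness again by uniform convexity and completeness). Now $p_n\in M_n$ \emph{by construction}, the distances $\rho_n=d(x_0,p_n)$ increase to a finite limit $\rho$ (the sets are nested and uniformly bounded), and if $d(p_n,p_l)\ge\varepsilon$ for $n\le l$, the midpoint $w$ of $p_n,p_l$ lies in $M_n$ by convexity, so $d(x_0,w)\ge\rho_n$, while your displayed uniform convexity inequality with $R=\rho$ gives $d(x_0,w)\le(1-\delta)\rho<\rho_n$ for large $n$ --- a contradiction. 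Hence $(p_n)$ is Cauchy and its limit lies in every closed set $M_N$. In the Banach case one can instead use weak compactness of bounded closed convex sets in a reflexive space, which is the alternative the paper mentions in a footnote. Your instinct that uniform convexity plus completeness must manufacture a canonical point in the intersection is exactly right; the canonical point has to be a projection, not a circumcenter.
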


\begin{proof}
For any $d>m$ the set $C_d=\{x\in X:d_K(x)\leq d\}$ is clearly closed and non-empty, convex since $X$ is BNPC, and bounded since we assume $d_K\to\infty$. By \cite{GKM}, Lemma 2.2\footnote{For UC Banach spaces this also follows from week compactness of closed bounded convex sets.} (see also \cite{Monod}) the intersection $\cap_{d>m}C_d=M$ of bounded closed convex sets is non-empty.
\end{proof}

\begin{rem}
In case $X$ is a Banach space and $G$ is a non-compact simple Lie group over a local field, and $K\subset G$ consists of non-ellyptic elements, one can show that $M$ must be a single point.
\end{rem}

\section{Minimal average displacement}
Let $\gO$ be a compact set such that $KU\subset\gO$ for some open set $U\subset G$. For $x\in X$ let $E_\gO(x):=\int_{g\in\gO}d(x,g\cdot x)^2$ be the averaged squared displacement of $x$, and let $r=\inf_{x\in X}E_\gO(x)$.

\begin{prop}\label{prop:average}
The set $P$ of points $p\in X$ with $E(p)=r$ is non-empty closed convex and bounded. Furthermore, if $X$ is a Banach space, $P$ is a singleton.
\end{prop}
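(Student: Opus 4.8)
The plan is to follow the blueprint of Lemma~\ref{lem:minimal}, the only genuinely new input being a quantitative comparison between $E_\gO$ and $d_K$. Fix a left Haar measure $\mu$ on $G$, and for a fixed $x$ abbreviate $\gd(g)=d(x,g\cdot x)$. For each $s>r$ set $C_s=\{x\in X:E_\gO(x)\le s\}$. Each $C_s$ is closed and convex: continuity of the action together with compactness of $\gO$ makes $E_\gO$ continuous (dominated convergence), while for every fixed $g$ the map $x\mapsto d(x,g\cdot x)^2$ is convex along geodesics — since $X$ is BNPC the distance between the two geodesics $t\mapsto\gc(t)$ and $t\mapsto g\cdot\gc(t)$ is convex, and $t\mapsto t^2$ is convex and increasing on $[0,\infty)$ — so $E_\gO$ is convex as an average of convex functions. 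Granting that the $C_s$ are also bounded, they form a nested family of non-empty bounded closed convex sets, and \cite{GKM}, Lemma~2.2 gives $P=\cap_{s>r}C_s\neq\emptyset$; being an intersection of closed convex bounded sets, $P$ is closed, convex and bounded.

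The crux is therefore boundedness of the $C_s$, i.e.\ that $E_\gO(x)\to\infty$ as $x\to\infty$; I would establish the explicit estimate $E_\gO(x)\ge\tfrac{\mu(U)}{8}\,d_K(x)^2$. Pick $k^\ast\in K$ with $\gd(k^\ast)=d_K(x)$. Since $k^\ast U\subset\gO$, left-invariance of $\mu$ gives $E_\gO(x)\ge\int_{k^\ast U}\gd(g)^2\,d\mu(g)=\int_U\gd(k^\ast u)^2\,d\mu(u)$, and subadditivity of the displacement ($\gd(ab)\le\gd(a)+\gd(b)$, $\gd(a^{-1})=\gd(a)$) yields $\gd(k^\ast u)\ge D-\gd(u)$ with $D:=d_K(x)$. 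On the good set $W=\{u\in U:\gd(u)\le D/2\}$ this gives $\gd(k^\ast u)\ge D/2$, whereas Chebyshev's inequality applied to $\int_U\gd^2\le E_\gO(x)$ (valid since $1\in K$ forces $U\subset\gO$) bounds $\mu(U\setminus W)\le 4E_\gO(x)/D^2$. Hence $E_\gO(x)\ge\tfrac{D^2}{4}\,\mu(W)\ge\tfrac{D^2}{4}\big(\mu(U)-4E_\gO(x)/D^2\big)$, which rearranges to the claimed bound. As $d_K\to\infty$ at infinity, $C_s\subset\{x:d_K(x)\le\sqrt{8s/\mu(U)}\}$ is bounded. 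This Chebyshev/self-reference step is the one place where all the hypotheses ($U$ open so $\mu(U)>0$, $K$ generating, $d_K\to\infty$) must be used, and I expect it to be the main obstacle.

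For the Banach case I would prove uniqueness by strict convexity. Write the action as $g\cdot x=\pi(g)x+b(g)$ with $\pi$ a linear isometric representation. If $p,q\in P$ and $m=\tfrac{p+q}{2}$, then $m-g\cdot m=\tfrac12\big((p-g\cdot p)+(q-g\cdot q)\big)$, and uniform convexity makes $\|\cdot\|^2$ strictly convex, so $\|m-g\cdot m\|^2\le\tfrac12\|p-g\cdot p\|^2+\tfrac12\|q-g\cdot q\|^2$ with equality precisely when $p-g\cdot p=q-g\cdot q$, i.e.\ when $\pi(g)(p-q)=p-q$. Integrating over $\gO$ and using $E_\gO(m)\ge r=E_\gO(p)=E_\gO(q)$ forces equality for $\mu$-a.e.\ $g\in\gO$; since $\gO\supset KU$ has non-empty interior and $K$ generates $G$, the vector $p-q$ is then $\pi(G)$-invariant. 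But then $x\mapsto x+t(q-p)$ leaves every displacement $d(x,g\cdot x)$ unchanged, so $d_K$ is constant along the unbounded line $p+\BR(q-p)$, contradicting $d_K\to\infty$ unless $p=q$. Hence $P$ is a singleton.
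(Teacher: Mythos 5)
Your proposal is correct, and its overall skeleton (closed convex bounded sublevel sets of $E_\gO$, intersection non-empty by \cite{GKM}, Lemma~2.2, then strict convexity of $\|\cdot\|^2$ in the Banach case) is exactly the paper's. The one substantive divergence is the boundedness step: the paper invokes Lemma~\ref{lem:linear}, which is outsourced to \cite{GKM}, Lemma~2.3 and gives linear growth $E(x)>c\,d(x,x_0)$, whereas you prove from scratch the explicit quadratic bound $E_\gO(x)\geq\frac{\mu(U)}{8}d_K(x)^2$ via left-invariance, subadditivity of the displacement $\gd$, and a Chebyshev argument. Your estimate is a quantitative re-derivation of the same underlying mechanism (the proof of Lemma~\ref{lem:linear} rests on $\gO\supset U\cup kU$ and the same triangle-inequality trick), so it buys self-containedness and an explicit constant rather than a new idea; note your Chebyshev step can even be skipped, since $\max\bigl(\gd(u),\gd(k^\ast u)\bigr)\geq D/2$ pointwise on $U$ gives $\gd(u)^2+\gd(k^\ast u)^2\geq D^2/4$, and integrating yields $2E_\gO(x)\geq\frac{D^2}{4}\mu(U)$ directly. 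In the uniqueness part you also fill two details the paper leaves implicit: the passage from $\mu$-a.e.\ equality on $\gO$ to invariance of $p-q$ under all of $G$ (via closedness of the stabilizer, $KU\subset\gO$, and $K$ generating), and the reason an invariant vector is contradictory (translation by $t(q-p)$ preserves all displacements, so $d_K$ is constant along an unbounded line, contradicting $d_K\to\infty$) --- the paper compresses both into ``in either way we get a contradiction.'' Both write-ups are sound; yours is longer but independent of the external citation.
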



\begin{lem}\label{lem:linear}
Given $x_0\in X$, there is a constant $c>0$ such that $d(x,x_0)\geq\frac{1}{c}\Rightarrow E(x)> c d(x,x_0)$.\footnote{For linear isometric actions on UC Banach space, the lemma says that non-existance of asymptotically invariant vectors is equivalent to the existence of  $\gep>0$ such that $\int_\gO\|gv-v\|\geq\gep\| v\|$ for every vector $v$.}
\end{lem}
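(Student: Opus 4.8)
The plan is to deduce the statement from two structural properties of the averaged displacement $E$: that it is a convex function on $X$, and that it is coercive, i.e. $E(x)\to\infty$ as $x\to\infty$. Once both are in hand, a one-dimensional convexity argument along geodesics upgrades coercivity to the desired uniform linear lower bound. I expect the coercivity step to be the real obstacle, since the hypothesis is phrased in terms of the max-displacement $d_K$ while the conclusion concerns the averaged (integrated) quantity $E$, so the first task is to bridge the two.

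First I would establish coercivity by bounding $d_K$ from above by $E^{1/2}$. Fix $k\in K$ and $u\in U$, and write, using that the action is isometric,
\[
d(x,k\cdot x)\le d(x,ku\cdot x)+d(ku\cdot x,k\cdot x)=d(x,ku\cdot x)+d(x,u\cdot x).
\]
Integrating over $u\in U$, the left side becomes $|U|\,d(x,k\cdot x)$, while by left-invariance of Haar measure and the inclusions $kU\subseteq\Omega$ and $U\subseteq\Omega$ (the latter since $1\in K$) the right side is at most $2\int_\Omega d(x,g\cdot x)\,dg$, which Cauchy--Schwarz bounds by $2|\Omega|^{1/2}E(x)^{1/2}$. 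Hence $d(x,k\cdot x)\le A\,E(x)^{1/2}$ with $A=2|\Omega|^{1/2}/|U|$, and taking the maximum over $k\in K$ gives $d_K(x)\le A\,E(x)^{1/2}$, equivalently $E(x)\ge A^{-2}d_K(x)^2$. Since $d_K(x)\to\infty$ as $x\to\infty$ by hypothesis, this yields $E(x)\to\infty$; in particular, for every $M$ there is $R>0$ such that $E(z)\ge M$ whenever $d(z,x_0)\ge R$.

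For convexity I would use that $X$ is Busemann non-positively curved: for each $g$ the map $t\mapsto d(\gamma(t),g\cdot\gamma(t))$ is convex along any geodesic $\gamma$ (it is the distance between the two geodesics $\gamma$ and $g\cdot\gamma$, the latter being a geodesic since $g$ is an isometry), hence so is its square, and integrating over $g\in\Omega$ shows that $E\circ\gamma$ is convex; thus $E$ is convex on $X$.

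Finally I would combine the two. Apply the coercivity statement with $M=E(x_0)+1$ to obtain $R>0$ such that $E(z)\ge E(x_0)+1$ for $d(z,x_0)\ge R$. Given $x$ with $t:=d(x,x_0)\ge R$, let $y$ be the point of the geodesic $[x_0,x]$ at distance $R$ from $x_0$; convexity of $E$ along $[x_0,x]$ gives $E(y)\le (1-R/t)E(x_0)+(R/t)E(x)$, which rearranges to
\[
E(x)\ge \frac{t}{R}\bigl(E(y)-E(x_0)\bigr)+E(x_0)\ge \frac{d(x,x_0)}{R}.
\]
Choosing $c=\tfrac{1}{2R}$ then does the job: if $d(x,x_0)\ge 1/c=2R\ge R$, the bound above gives $E(x)\ge d(x,x_0)/R=2c\,d(x,x_0)>c\,d(x,x_0)$. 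The only points requiring care are the existence and uniqueness of the segment $[x_0,x]$ and the finiteness of $E$, both guaranteed by completeness and Busemann convexity of $X$ together with continuity of the action on the compact set $\Omega$.
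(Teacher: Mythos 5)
Your proof is correct, and it amounts to a self-contained expansion of what the paper compresses into a single line: the paper's entire proof is the observation that $\Omega\supset U\cup kU$ for every $k\in K$, followed by an appeal to \cite{GKM}, Lemma 2.3. Your first step --- integrating the triangle inequality $d(x,k\cdot x)\le d(x,ku\cdot x)+d(x,u\cdot x)$ over $u\in U$, then using left invariance of Haar measure and Cauchy--Schwarz to get $d_K(x)\le A\,E(x)^{1/2}$ with $A=2|\Omega|^{1/2}/|U|$ --- is precisely the content of the paper's remark that $U\cup kU\subset\Omega$, and you make explicit the passage from the integrated displacement to the squared average, which the paper leaves inside ``follows easily.'' Your second and third steps --- Busemann convexity of $t\mapsto d(\gamma(t),g\cdot\gamma(t))$, hence of $E\circ\gamma$ after squaring and integrating, upgraded by coercivity to a uniform linear lower bound along geodesics --- reprove the black-boxed lemma of \cite{GKM} rather than invoke it. What your route buys is independence from the reference, an explicit constant $c=1/(2R)$, and as a byproduct the stronger quadratic bound $E(x)\ge A^{-2}d_K(x)^2$; what the paper's route buys is brevity. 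The side conditions you flagged are all fine and worth stating once: $|U|>0$ since $U$ is a nonempty open set and Haar measure is positive on open sets; $U\subset\Omega$ because the paper's standing hypothesis $1\in K$ gives $U\subset KU$; $E$ is finite since $g\mapsto d(x,g\cdot x)$ is continuous on the compact set $\Omega$; and the geodesic $[x_0,x]$ exists and is unique because a complete Busemann non-positively curved space is uniquely geodesic.
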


\begin{proof}
Since $\gO\supset U\cup kU$ for every $k\in K$, the lemma follows easily from \cite{GKM}, Lemma 2.3.
\end{proof}


\begin{proof}[Proof of \ref{prop:average}]
It follows that for every $d>r$ the non-empty set $A_d=\{ x\in X:E(x)\leq d\}$ is bounded. It is also closed and convex. Thus, as above, one derives that $P=A_r=\cap_{d>r}A_d$ is non-empty.

Suppose now that $X$ is a Banach space. If $P$ would contain two different points, $p_1,p_2$ then, by strict convexity of $d\mapsto d^2$ we would either have $E(\frac{p_1+p_2}{2})<r$ or that $gp_1-p_1=gp_2-p_2$ for all $g\in\gO$ which means that $p_2-p_1$ is an invariant vector for the associated linear action. In either way we get a contradiction.
\end{proof}

\section{Fixed point for higher rank simple Lie group on $L_p(\mu)$-spaces}

We now give a proof of following theorem from \cite{BFGM} based on the results above. The argument is similar to \cite{BFGM} but somewhat more optimal.

\begin{thm}[\cite{BFGM}]
Let $k$ be a local field and $\BG$ a simple $k$--algebraic group of $k$--rank $\geq 2$. Then $\BG(k)$ has property $F_{L_p(\mu)}$ for any $1<p<\infty$ and any $\gs$-finite Borel measure $\mu$.
\end{thm}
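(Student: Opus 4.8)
The plan is to analyze an arbitrary continuous isometric action of $G=\BG(k)$ on $X=L_p(\mu)$ and show it has a fixed point. Since every isometry of $L_p$ is affine, the action takes the form $g\cdot v=\pi(g)v+b(g)$ for a linear isometric representation $\pi$ and a continuous cocycle $b$; and because $X$ is complete, Busemann non-positively curved and uniformly convex, a fixed point exists precisely when some orbit is bounded (a bounded orbit is $G$-invariant and has a unique circumcenter, which is then $G$-fixed). First I would reduce to the case where $\pi$ has no nonzero invariant vector: the $\pi$-invariant subspace $X^\pi$ splits off $\pi$-equivariantly as a complemented $L_p$-subspace (see \cite{BFGM}), the induced action on it is by translations $v\mapsto v+b(g)$, and the resulting continuous homomorphism $G\to X^\pi$ into an abelian group is trivial since $\BG(k)$ admits no nontrivial continuous homomorphism to an abelian group. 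After this splitting we may assume $X^\pi=0$.

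The heart of the argument, and the step I expect to be the main obstacle, is to upgrade ``no invariant vector'' to ``no almost invariant vector'' for $\pi$, i.e. to establish property $(T_{L_p})$ for $G$; this is exactly where $k$-rank $\geq 2$ enters and where the Hilbertian tools of the $p=2$ proof are unavailable. Here I would exploit the root structure: two non-proportional roots produce an abelian unipotent subgroup $U$ normalized by a $k$-split torus $S$ that contracts it, sitting inside a subgroup isomorphic to $\SL_2(k)\ltimes k^2$. A Mautner-type contraction argument, using uniform convexity in place of orthogonality to control the geometry as in \cite{GKM}, Lemma 2.3, shows that any vector almost invariant under $U$ is almost invariant under the Levi, which yields the relative property for the pair; the hypothesis $k$-rank $\geq 2$ then lets one propagate invariance across the whole root system, so that almost invariance under a compact generating set would force a genuine invariant vector, a contradiction. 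In the quantitative form of Lemma \ref{lem:linear} this produces $\gep>0$ with $\int_\gO\|gv-v\|\geq\gep\|v\|$ for every $v$, which is precisely the hypothesis $d_K(x)\to\infty$ that drives the geometric results above.

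With $d_K(x)\to\infty$ in hand, the two elementary facts of the previous sections finish the argument and replace the cohomological bookkeeping of \cite{BFGM}. Lemma \ref{lem:minimal} supplies a non-empty bounded minimal-displacement set and Proposition \ref{prop:average} a unique minimal average-displacement point, and through the dictionary of Lemma \ref{lem:linear} between displacement and the linear part, the absence of almost invariant vectors forces the cocycle $b$ to be bounded: an unbounded cocycle, via the relation $b(hg)=\pi(h)b(g)+b(h)$, would make the normalized vectors $b(g)/\|b(g)\|$ almost invariant under $\pi$, contradicting the spectral gap just established. Hence the orbit of the origin is bounded, its circumcenter is a $G$-fixed point, and $\BG(k)$ has property $F_{L_p(\mu)}$. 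The genuine difficulty is thus isolated in the spectral-gap step of the second paragraph, the passage from $d_K\to\infty$ to a fixed point being purely the geometry of Sections 1 and 2.
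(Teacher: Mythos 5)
The decisive step in your third paragraph --- ``an unbounded cocycle would make $b(g)/\|b(g)\|$ almost invariant, hence the spectral gap forces $b$ to be bounded'' --- is a genuine gap, and the claimed implication is false as a general principle. From $b(hg)=\pi(h)b(g)+b(h)$ you get $\pi(h)b(g)-b(g)=\bigl(b(hg)-b(g)\bigr)-b(h)$, and $\|b(hg)-b(g)\|=\|h\cdot(g\cdot 0)-g\cdot 0\|$ is exactly the displacement of the orbit point $g\cdot 0$ --- the very quantity you have no control over, so the argument is circular: for $b(g)/\|b(g)\|$ to be almost invariant you would need the displacement to grow \emph{sublinearly} in $\|b(g)\|$ along some sequence, whereas Lemma \ref{lem:linear} says that under a spectral gap the (averaged) displacement grows at least \emph{linearly} in the distance from a base point. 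Concretely, the implication ``no almost invariant vectors $\Rightarrow$ every cocycle is bounded'' fails already for the regular representation $\gl$ of the free group $F_2$ on $\ell_2(F_2)$: it has a spectral gap, yet $H^1(F_2,\gl)\neq 0$ (the first $\ell_2$-Betti number of $F_2$ is positive), and a bounded cocycle into a uniformly convex space is a coboundary, so unbounded cocycles exist. A spectral gap only yields that $B^1$ is closed (Guichardet); it does not kill $H^1$. So even granting your second paragraph in full (property $(T_{L_p})$ for $\BG(k)$), the fixed point does not follow the way you claim --- the passage from a gap for the linear part to a bounded orbit is precisely where the higher-rank structure must act on the \emph{affine} level.

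The paper closes exactly this hole by a mechanism absent from your proposal. First, the gap is established not for all of $G$ but for an embedded $\SL_2(k)$ acting on $B/B'$, by transporting the Hilbertian statement (strong relative property $(T)$ plus Howe--Moore for $\SL_2\leq\SL_3$) to $L_p$ via Banach's description of the isometries of $L_p(\mu)$ and the Mazur map $M_{2,p}$ --- a short, rigorous route, whereas your from-scratch Mautner-type contraction argument in uniformly convex geometry is only a sketch and is also misstated (relative property $(T)$ for $(\SL_2(k)\ltimes k^2,k^2)$ makes vectors almost invariant under the whole pair nearly invariant under $k^2$, not the other way around). Second, and crucially, the bounded set of Lemma \ref{lem:minimal} (or the unique point of Proposition \ref{prop:average}) is \emph{canonically} attached to the $\SL_2(k)$-action: it is not $G$-invariant, but its circumcenter is fixed by the centralizer $A_1=\{\diag(\gl,\gl,\gl^{-2})\}$, a second embedding of $\SL_2$ gives a point fixed by $A_2$, and the Cartan decomposition $\SL_3(k)=KA^+K\subset KA_1A_2K$ then bounds every orbit, whose circumcenter is the global fixed point; the general case reduces to $\SL_3$ or $\mathrm{Sp}_4$. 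If you want to salvage your outline, you must replace the bounded-cocycle claim by this centralizer-plus-Cartan argument (or some comparable use of rank $\geq 2$ at the level of the affine action); your reduction modulo $X^\pi$ and the final circumcenter step are fine as stated.
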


Consider first the case of $\BG=\SL_3$ with  $\SL_2$ imbedded in the left upper corner. It is a well known consequence of strong relative property $(T)$ and the Howe--Moore theorem that for any unitary representation of $\SL_3(k)$ on a Hilbert space $H$, the induced representation of $\SL_2(k)$ on the orthogonal to the space of invariant vectors, does not have asymptotically invariant vectors.

As explained in \cite{BFGM}, one extends such statements to general $L_p$--spaces, by the following reasoning.
Let $p\ne 2$. By a classical theorem of Banach, any linear isometry $U$ of $L_p(\mu)$ is of the form
$U(f)(x)=f(Tx)\big(\frac{d_*T\mu}{d\mu}\big)^{1/p}h(x)$ for some measure class preserving transformation $T$ and
a function $h$ of absolute value 1. It follows that the Mazur map
$M_{2,p}:f\mapsto\text{sign}(f)|f|^\frac{2}{p}$ intertwines linear isometric actions on $L_p$ with isometric
actions on $L_2$. Moreover, the Mazur map and its inverse are uniformly continuous on bounded sets. We thus
obtain:

\begin{prop}
For any linear isometric representation of $\SL_3(k)$ on $B=L_p(\mu)$ the induced action of $\SL_2(k)$ on the quotient space $B/B'$, where $B'$ is the subspace of invariant vectors, does not have asymptotically invariant vectors.
\end{prop}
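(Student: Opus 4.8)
The plan is to conjugate the given $L_p$-action to a unitary action on $L_2$ by the Mazur map and then invoke the Hilbert-space statement recalled above. Write $\rho$ for the linear isometric representation of $\SL_3(k)$ on $B=L_p(\mu)$. As explained before the proposition, the Mazur map $M$ carries $\rho$ to a genuine \emph{unitary} representation $\rho_2$ of $\SL_3(k)$ on $H=L_2(\mu)$, in the sense that $M\rho(g)=\rho_2(g)M$ for all $g$, and both $M$ and $M^{-1}$ are uniformly continuous on bounded sets. Since $M$ is a bijection intertwining the two actions, it carries the fixed-point set of $\rho$ onto that of $\rho_2$; hence $M(B')=H'$, where $H'=H^{\SL_3(k)}$ is the space of $\rho_2$-invariant vectors. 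I would then argue by contradiction: a sequence of asymptotically $\SL_2(k)$-invariant unit vectors in $B/B'$ should be pushed through $M$ into $H$, killed off the fixed part by the Hilbert statement, and pulled back to contradict unit norm in the quotient.

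The first real task is to replace asymptotically invariant vectors of the quotient $B/B'$ by \emph{honest} asymptotically invariant vectors of $B$. Since $L_p(\mu)$ is super-reflexive, the Alaoglu--Birkhoff (mean ergodic) projection $P$ onto $B'$ exists, is $\SL_3(k)$-equivariant and satisfies $\|P\|\le1$, so that $B=B'\oplus B''$ with $B''=\ker P$ a closed invariant complement and $\|I-P\|\le2$. Given asymptotically $\SL_2(k)$-invariant unit vectors $\bar v_n\in B/B'$, represent them by $u_n=(I-P)v_n\in B''$, which define the same classes. Using equivariance of $P$ one has $\rho(g)u_n-u_n=(I-P)(\rho(g)v_n-v_n)$, whence $\|\rho(g)u_n-u_n\|\le\|I-P\|\,\dist(\rho(g)v_n-v_n,B')=\|I-P\|\,\|\rho(g)\bar v_n-\bar v_n\|_{B/B'}\to0$ uniformly on compact subsets of $\SL_2(k)$; likewise $1\le\|u_n\|\le\|I-P\|\le2$. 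Thus the normalized vectors $\hat u_n=u_n/\|u_n\|$ are honest asymptotically $\SL_2(k)$-invariant unit vectors of $B$.

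Now I would transfer through $L_2$ and close the loop. By the intertwining relation and the uniform continuity of $M$ on the unit sphere, the unit vectors $M(\hat u_n)\in H$ are asymptotically $\SL_2(k)$-invariant for $\rho_2$. Decomposing orthogonally $M(\hat u_n)=w_n+w_n'$ with $w_n\in H\ominus H'$ and $w_n'\in H'$, and using that the orthogonal projection is $\SL_3(k)$-equivariant, the components $w_n$ are again asymptotically $\SL_2(k)$-invariant; since $H\ominus H'$ carries no asymptotically invariant vectors by the recalled Hilbert statement, $\|w_n\|\to0$, i.e. $\dist(M(\hat u_n),H')\to0$. Finally, because $M^{-1}(H')=B'$ and $M^{-1}$ is uniformly continuous on the unit ball, $\dist(\hat u_n,B')\to0$; but $\dist(\hat u_n,B')=\|\hat u_n+B'\|_{B/B'}=1/\|u_n\|\ge1/2$, a contradiction. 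This proves the proposition.

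I expect the main obstacle to be the quotient bookkeeping forced by the nonlinearity of $M$. Because $M$ does not map a linear complement of $B'$ to a linear complement of $H'$, one cannot simply identify $B/B'$ with $H\ominus H'$ through $M$; the argument is obliged to use only the set-theoretic equality $M(B')=H'$ together with the two-sided uniform continuity of $M$, and to manufacture genuine asymptotically invariant vectors in $B$ from quotient ones via the equivariant projection $P$. Verifying that $P$ exists with $\|P\|\le1$ for the (non-amenable) group $\SL_3(k)$, and that proximity to $B'$ \emph{within} the complement $B''$ forces smallness uniformly in $n$, is the technical heart; everything else is a direct transcription of the Hilbert-space gap across the Mazur correspondence.
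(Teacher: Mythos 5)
Your proof is correct and takes essentially the same route the paper intends: the paper obtains the proposition by conjugating the $L_p$-action to a unitary representation via the Mazur map and invoking the Hilbert-space statement (relative property $(T)$ plus Howe--Moore), leaving the quotient bookkeeping to \cite{BFGM}, which is exactly what you flesh out using the equivariant projection onto $B'$ and the two-sided uniform continuity of $M$ on bounded sets. Two cosmetic points only: representatives of unit quotient classes can be chosen with $\|v_n\|\le 1+\varepsilon$ rather than $\le 1$, and your argument needs only a \emph{bounded} equivariant projection (as provided for uniformly convex, uniformly smooth spaces in \cite{BFGM}), not the sharper claim $\|P\|\le 1$.
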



Suppose now that $\SL_3(k)$ acts by isometries on $B=L_p(\mu)$ and let $B'$ be the subspace of invariant vectors for the associated linear action. It follows that the action of $\SL_2(k)$ on $B/B'$ satisfies the assumptions of Sections 1,2. Fix a compact generating set $\gO$ for $\SL_2(k)$ and let $p_1$ be either the circumcenter of the set $M$ from Lemma \ref{lem:minimal} or the unique minimizing point for the average displacement, Proposition \ref{prop:average}. $p_1$ must be fixed by the centralizer
$A_1=\{\diag(\gl,\gl,\gl^{-2}):\gl\in k\}$ of $\SL_2(k)$. Considering the $\SL_2(k)$ on the right lower corner, one gets a fixed point $p_2$ for $A_2=\{\diag(\gd^{-2},\gd,\gd):\gd\in k\}$. By the Cartan decomposition, there is a compact group $K\leq\SL_3(k)$ such that $\SL_3(k)=KA^+K\subset KA_1A_2K$.  Since $K$ is compact it also has a fixed point.
Thus the $G~(=KA_1A_2K)-$action has bounded orbits, hence a global fixed point.

Finally, note that the exact same argument holds with $SL_3(k)$ replaced by $\text{Sp}_4(k)$, and the general case follows as any higher rank group contains either a copy of $\SL_3$ or $\text{Sp}_4$. Again, we refer to \cite{BFGM} for details.\qed

\begin{rem}
U. Bader suggested a third approach, similar to the one from Section 2; Suppose $\gO$ is a relatively compact open generating set. Using the Radon--Nikodym property of UC Banach space one can average along $\gO$ orbits, and obtain an affine operator $x\mapsto\int_\gO g\cdot x$. Using a variant of Lemma \ref{lem:linear} one shows that, in case there are no asymptotically invariant vectors, this operator has Lipschitz constant $<1$, and hence a unique fixed point.
\end{rem}


\end{document}